\title[Weak Solutions for the 3D~Navier-Stokes-plate system]{A global existence result on weak solutions for the 3D~Navier-Stokes-plate system with no contact}
\author[M.~Bukal]{Mario Bukal}
\address{University of Zagreb, Faculty of Electrical Engineering and Computing, Unska 3, 10000 Zagreb, Croatia}
\email{mario.bukal@fer.hr}
\author[I.~Kukavica]{Igor Kukavica}
\address{Department of Mathematics, University of Southern California, Los Angeles, CA 90089}
\email{kukavica@usc.edu}
\author[L.~Li]{Linfeng Li}
\address{Department of Mathematics, University of California Los Angeles, Los Angeles, CA 90095}
\email{lli265@math.ucla.edu}
\author[B.~Muha]{Boris Muha}
\address{University of Zagreb, Faculty of Science, Department of Mathematics, Bijeni\v cka cesta 30, 10000 Zagreb, Croatia}
\email{borism@math.hr}
\chardef\forshowkeys=0
\chardef\refcheck=0
\chardef\showllabel=0
\chardef\sketches=0
\chardef\showcolors=1
\let\pa\partial   
\let\eps\varepsilon  
\newcommand{\N}{{\mathbb N}}    
\newcommand{\R}{{\mathbb R}}  
\newcommand{\Z}{{\mathbb Z}}
\newcommand{\diver}{\operatorname{div}}
\newcommand{\Rey}{\operatorname{Re}}
\newcommand{\dist}{\operatorname{dist}}
\newcommand{\D}{\operatorname{D}}
\newcommand{\dd}{\,d}
\newcommand{\red}{\textcolor{red}}
\newcommand{\nablax}{\nabla_x}
\newcommand{\Deltax}{\Delta_x}
\begin{document}
	%varmac (various macros)
%	\def{\mathbb{T}^2}one{\text{1-channel}}
	\def\ch{{\hat{c}}}
	\def\bp{\bar{\partial}}
	\def\bbeta{\beta}
	\def\tt{{\mathbb{T}^2}}
	\def\ttt{t_\ast}
	\def\xxx{x_\ast}
	\def\iii{I_\ast}
	\def\bbb{B_\ast}
	\def\aa{\nu}
	\def\etah{\eta}
	\def\uh{u}
	\def\vh{v}
	\def\qh{q}
	\def\ph{p}
	\def\inp{{\colr IN PROGRESS}}
	\def\rth{{\colr REVISED UP TO HERE}}
	\def\fNS{f}
	\def\fP{g}
	\def\bnew{\colr {\bf }}
	\def\enew{\colb {}}
	\def\bold{\colu {\bf }}
	\def\eold{\colb {}}
	\def\inte{\int_{{\mathbb{T}^2}_\epsilon}}
	\def\intb{\int_{\Gamma_\epsilon}}
	\def\D{D}
	\def\CommT{{R}}
	\def\BndT{{B}}

	\def\XX{X}
	\def\YY{Y}
	\def\ZZZ{Z}
	
	\def\intint{\int    \int}
	\def\OO{\mathcal O}
	\def\SS{\mathbb S}
	\def\CC{\mathbb C}
	\def\N{\mathbb N}
	\def\RR{\mathbb R}
	\def\TT{\mathbb T}
	\def\ZZ{\mathbb Z}
	\def\HH{\mathbb H}
	\def\RSZ{\mathcal R}
	\def\LL{\mathcal L}
	\def\SL{\LL^1}
	\def\ZL{\LL^\infty}
	\def\GG{\mathcal G}
	\def\erf{\mathrm{Erf}}
	\def\mgt#1{\textcolor{magenta}{#1}}
	\def\ff{\rho}
	\def\gg{G}
	\def\sqrtnu{\sqrt{\nu}}
	\def\ww{w}
	\def\ft#1{#1_\xi}
	\def\lec{\lesssim}
	\def\les{\lesssim}
	\def\gec{\gtrsim}
	\renewcommand*{\Re}{\ensuremath{\mathrm{{\mathbb R}e  }}}
	\renewcommand*{\Im}{\ensuremath{\mathrm{{\mathbb I}m  }}}
	
	\ifnum\showllabel=1
	\def\llabel#1{\marginnote{\color{lightgray}\rm\small(#1)}[-0.0cm]\notag}
	\else
	\def\llabel#1{\notag}
	\fi
	
	\newcommand{\norm}[1]{\left\|#1\right\|}
	\newcommand{\nnorm}[1]{\lVert #1\rVert}
	\newcommand{\abs}[1]{\left|#1\right|}
	\newcommand{\NORM}[1]{| | | #1| | |}
	
	\newtheorem{theorem}{Theorem}[section]
	\newtheorem{Theorem}{Theorem}[section]
	\newtheorem{corollary}[theorem]{Corollary}
	\newtheorem{Corollary}[theorem]{Corollary}
	\newtheorem{proposition}[theorem]{Proposition}
	\newtheorem{Proposition}[theorem]{Proposition}
	\newtheorem{Lemma}[theorem]{Lemma}
	\newtheorem{lemma}[theorem]{Lemma}
	
	\theoremstyle{definition}
	\newtheorem{definition}{Definition}[section]
	\newtheorem{remark}[theorem]{Remark}

	\def\theequation{\thesection.\arabic{equation}}
	\numberwithin{equation}{section}

	\definecolor{mygray}{rgb}{.6,.6,.6}
	\definecolor{myblue}{rgb}{9, 0, 1}
	\definecolor{colorforkeys}{rgb}{1.0,0.0,0.0}

	\newlength\mytemplen
	\newsavebox\mytempbox
	\def\weaks{\text{            weakly-* in }}
	\def\weak{\text{            weakly in }}
	\def\inn{\text{            in }}
	\def\cof{\mathop{\rm cof  }\nolimits}
	\def\Dn{\frac{\partial}{\partial N}}
	\def\Dnn#1{\frac{\partial #1}{\partial N}}
	\def\tdb{\tilde{b}}
	\def\tda{b}
	\def\qqq{u}
	\def\lat{\Delta_{\bx}}
	\def\biglinem{\vskip0.5truecm\par==========================\par\vskip0.5truecm}
	
	\def\inon#1{\hbox{\ \ \ \ \ \ \ }\hbox{#1}}               
	\def\onon#1{\inon{on~$#1$}}
	\def\inin#1{\inon{in~$#1$}}
	
	\def\FF{F}
	\def\andand{\text{\indeq and\indeq}}
	\def\ww{w(y)}
	\def\ee{\epsilon_0}
	\def\startnewsection#1#2{ \section{#1}\label{#2}\setcounter{equation}{0}}   
	\def\nnewpage{ }
	\def\sgn{\mathop{\rm sgn  }\nolimits}    
	\def\Tr{\mathop{\rm Tr}\nolimits}    
	\def\div{\mathop{\rm div}\nolimits}
	\def\curl{\mathop{\rm curl}\nolimits}
	\def\dist{\mathop{\rm dist}\nolimits}  
	\def\supp{\mathop{\rm supp}\nolimits}
	\def\indeq{\quad{}}           
	\def\period{.}                       
	\def\semicolon{  ;}
	\def\pa{\partial}            
	\def\pt{\partial_t}                
	
	\ifnum\showcolors=1
	\def\colr{\color{red}}
	\def\colc{\color{cyan}}
	\def\colrr{\color{black}}
	\def\colb{\color{black}}
	\definecolor{colorgggg}{rgb}{0.1,0.5,0.3}
	\definecolor{colorllll}{rgb}{0.0,0.7,0.0}
	\definecolor{colorhhhh}{rgb}{0.3,0.75,0.4}
	\definecolor{colorpppp}{rgb}{0.7,0.0,0.2}
	\definecolor{coloroooo}{rgb}{0.45,0.0,0.0}
	\definecolor{colorqqqq}{rgb}{0.1,0.7,0}
	\def\coly{\color{lightgray}}
	\def\colg{\color{colorgggg}}
	\def\collg{\color{colorllll}}
	\def\cole{\color{coloroooo}}
	\def\coll{\color{colorqqqq}}
	\def\coleo{\color{colorpppp}}
	\def\colu{\color{blue}}
	\def\colc{\color{colorhhhh}}
	\def\colW{\colb}   
	\definecolor{coloraaaa}{rgb}{0.6,0.6,0.6}
	\def\colw{\color{coloraaaa}}
	\else
	\def\colr{\color{black}}
	\def\colrr{\color{black}}
	\def\colb{\color{black}}
	\def\coly{\color{black}}
	\def\colg{\color{black}}
	\def\collg{\color{black}}
	\def\cole{\color{black}}
	\def\coleo{\color{black}}
	\def\colu{\color{black}}
	\def\colc{\color{black}}
	\def\colW{\color{black}}
	\def\colw{\color{black}}
	\fi

	\def\comma{ {\rm ,\qquad{}} }            
	\def\commaone{ {\rm ,\quad{}} }          
	\def\nts#1{{\color{red}\hbox{\bf ~#1~}}} 
	\def\ntsf#1{\footnote{\color{colorgggg}\hbox{#1}}} 
	\def\blackdot{{\color{red}{\hskip-.0truecm\rule[-1mm]{4mm}{4mm}\hskip.2truecm}}\hskip-.3truecm}
	\def\bluedot{{\color{blue}{\hskip-.0truecm\rule[-1mm]{4mm}{4mm}\hskip.2truecm}}\hskip-.3truecm}
	\def\purpledot{{\color{colorpppp}{\hskip-.0truecm\rule[-1mm]{4mm}{4mm}\hskip.2truecm}}\hskip-.3truecm}
	\def\greendot{{\color{colorgggg}{\hskip-.0truecm\rule[-1mm]{4mm}{4mm}\hskip.2truecm}}\hskip-.3truecm}
	\def\cyandot{{\color{cyan}{\hskip-.0truecm\rule[-1mm]{4mm}{4mm}\hskip.2truecm}}\hskip-.3truecm}
	\def\reddot{{\color{red}{\hskip-.0truecm\rule[-1mm]{4mm}{4mm}\hskip.2truecm}}\hskip-.3truecm}
	
	\def\tdot{{\color{green}{\hskip-.0truecm\rule[-.5mm]{3mm}{3mm}\hskip.2truecm}}\hskip-.1truecm}
	\def\gdot{\greendot}
	\def\bdot{\bluedot}
	\def\ydot{\cyandot}
	\def\rdot{\cyandot}
	\def\fractext#1#2{{#1}/{#2}}
	\def\ii{\hat\imath}
	\def\boris#1{\textcolor{blue}{#1}}
	\def\vlad#1{\textcolor{cyan}{#1}}
	\def\igor#1{\text{{\textcolor{colorqqqq}{#1}}}}
	\def\linfeng#1{\textbf{\textcolor{colorqqqq}{LL:~#1}}}
	\def\igorf#1{\footnote{\text{{\textcolor{colorqqqq}{#1}}}}}
	\def\mario#1{\textcolor{purple}{#1}}

	\newcommand{\myr}[1]{{\color{red} #1 }}%
	\newcommand{\uvro}{\upvarrho}
	\newcommand{\bv}{\boldsymbol v}
	\newcommand{\bV}{\boldsymbol V}
	\newcommand{\bu}{\boldsymbol u}
	\newcommand{\bs}{\boldsymbol}
	\newcommand{\bx}{x}
	\newcommand{\obx}{\overline{x}}
	\newcommand{\bw}{w}
	\newcommand{\bn}{\mathbf n}
	\newcommand{\by}{\boldsymbol y}
	\newcommand{\bz}{\boldsymbol z}
	\newcommand{\bef}{\boldsymbol f}
	\newcommand{\material}{{\mathbb{T}^2}_{\eps,h}}
	\newcommand{\vol}{\operatorname{vol}}
	\newcommand{\bog}{{\rm Ext}_{\diver}}
	\newcommand{\p}{\partial}
	\newcommand{\UE}{U^{\rm E}}
	\newcommand{\PE}{P^{\rm E}}
	\newcommand{\KP}{K_{\rm P}}
	\newcommand{\uNS}{u^{\rm NS}}
	\newcommand{\vNS}{v^{\rm NS}}
	\newcommand{\pNS}{p^{\rm NS}}
	\newcommand{\uE}{u^{\rm E}}
	\newcommand{\vE}{v^{\rm E}}
	\newcommand{\pE}{p^{\rm E}}
	\newcommand{\ua}{u_{\rm   a}}
	\newcommand{\ue}{u_{\rm   e}}
	\newcommand{\ve}{v_{\rm   e}}
	\newcommand{\omegaL}{{\mathbb{T}^2_L}}  
	
	\def\red#1{\textcolor{red}{#1}}

\begin{abstract}
We consider the three-dimensional fluid-structure interaction system
modeling a system consisting of a viscous incompressible fluid and an elastic plate forming its moving upper boundary. 
The fluid is described by the incompressible Navier-Stokes equations with a free upper boundary that evolves according to the motion of the structure, coupled via the velocity- and stress-matching conditions. 
We show that under a rather general condition on the initial data,
there exists a global-in-time weak solution of the system. In
particular, there is no contact between the plate and the bottom boundary.
\end{abstract}

\keywords{Navier-Stokes equations, fluid-structure interaction, no-contact, weak solutions}
\maketitle

\setcounter{tocdepth}{2} 
%\tableofcontents

\section{Introduction}\label{sec01} 

\subsection{Motivation and outline}
Fluid-structure interaction (FSI) phenomena arise in a wide range of applications: from biological systems \cite{Tit94,HHS08,BGN14} and geophysical flows \cite{LPN13,TsRi12} to industrial processes~\cite{Dow15,YGJ17}. In particular, we emphasize the importance of microfluidic systems with compliant boundaries, which play a central role in the behavior of modern soft-walled micro-channels and are essential components of emerging lab-on-chip technologies~\cite{SSA04,DaFin06}. Experiments and theoretical analyses have revealed substantial physical insights in microfluidic systems, including peeling, healing and bursting \cite{HoMa04}, suppression of viscous fingering \cite{PIHJ12}, pressure-flow relationships \cite{GAGJ}, explicit pressure-deformation laws \cite{CCSS}, dynamical instabilities with modification of critical Reynolds numbers \cite{VK}, etc. %Chakraborty et al., 2012

Despite all this, the mathematical theory of such FSI systems remains incomplete. Coupling the incompressible Navier-Stokes equations with a moving elastic plate boundary yields a strongly nonlinear free-boundary problem in which the fluid domain evolves according to the structural displacement. This raises fundamental questions concerning the long-time behavior of solutions, the exchange of energy between fluid and structure, and the possibility of contact between the elastic wall and the opposing rigid boundary. 
%On the other hand, physical studies consistently report that such collapse does not occur, suggesting inherent stabilizing mechanisms arising from viscous dissipation and plate bending.

Motivated by these questions, in the present work we study a canonical FSI model consisting of the incompressible Navier-Stokes equations in a time-dependent domain coupled to a linearly elastic plate forming upper part of the fluid  boundary. Based on the energy dissipation inequality, we identify sufficient conditions on initial data that rule out possibility of a contact in finite time, and therefore establish the global-in-time existence of weak solutions. This is the first occurrence of the global-in-time existence of weak solutions to the FSI problem in the literature. As such, it provides a rigorous mathematical foundation for the operation of microfluidic systems that aligns with experimentally observed robustness in compliant channels~\cite{CJFY}.

Below we provide a brief literature review. The FSI model under consideration is formulated in Section~\ref{sec:FSI} in detail, and the main result (Theorem~\ref{T01}) is stated there. In Section~\ref{sec:proof} we present the proof of the main result, while in Section \ref{sec:app} we discuss applications of our result to microfluidic systems.

\subsection{Brief literature review}
%The study of fluid-structure interaction (FSI) systems has received considerable attention in both engineering and mathematical research, motivated by their broad applicability in fluid mechanics, bioengineering, and related fields.
%The model considered in this paper couples a viscous incompressible fluid with an elastic plate that forms part of the fluid boundary.
A major difficulty in establishing global existence of weak solutions is the possibility of contact between the deforming structure and the rigid bottom boundary in finite time.
Existence of weak solutions up to such a possible contact has been obtained in various contexts; see
\cite{CDEG,G,MC1,MC2,MS,TW}.
In two dimensions, Grandmont and Hillairet in \cite{GH} proved global strong solution for a viscoelastic beam-fluid interaction system by first ruling out contact and then propagating regularity. 
More recently, Breit and Roy in \cite{BR} proved a non-contact result for weak solutions of a compressible beam-fluid model under additional regularity assumptions, and
the authors of this paper extended the non-contact result to the three-dimensional incompressible plate-fluid system in \cite{BKLM}, thereby providing a positive answer to the question raised in~\cite[Remark~4.6]{BR}.
In contrast to our previous work \cite{BKLM}, where global existence was established conditionally on the additional regularity of weak solutions, the present paper proves global existence of a weak solution for the unconditional Cauchy problem under a condition that imposes a constraint on the initial energy compared to the average of the initial position of the plate, while allowing arbitrarily large initial energy and arbitrary proximity of the structure to the bottom.
Finally, we note that small-data global existence has been established in several related FSI models; see \cite{Co, IKLT1,IKLT2,KO1,KO2,MT,QGY}.

\section{Problem formulation and main result}
\label{sec:FSI}
%In this work, we study the coupling between a viscous, incompressible fluid and a deformable elastic structure forming its upper boundary, where the structure may be either visco-elastic or purely elastic.
\subsection{FSI problem}
At each time $t\geq0$, the fluid occupies the domain $\Omega_{\eta}(t)$, which is a subgraph of a space-time dependent function $\etah\colon {\mathbb{T}^2}\times[0,+\infty)\to {\mathbb R}$, representing the height of the moving boundary.
Here ${\mathbb{T}^2}=\R^2/\Z^2$ denotes the two-dimensional unit torus.
Thus, for any $t\geq0$, the domain is given by
\begin{equation*}
{\Omega}_{\eta} (t)=
\{ (\bx, z) : \bx=(x_1,x_2) \in {\mathbb{T}^2},
z\in (0,\eta(\bx,t))\}\subseteq\R^3.
\end{equation*}
The fluid is incompressible and Newtonian, and its motion is governed by the incompressible Navier-Stokes equations
\begin{align}
\begin{split}
\partial_{t} u - \Delta u + u\cdot \nabla u + \nabla p &= 0 \onon{\Omega_{\eta} (t) \times (0,T)}, \\
\diver u &= 0 \onon{\Omega_{\eta} (t) \times (0,T)},
\end{split}
\label{EQNS}
\end{align}
where $u\colon \Omega_{\eta} (t) \times (0,T) \to \mathbb{R}^3$ and $p\colon \Omega_{\eta} (t) \times (0,T) \to \mathbb{R}$ denote the velocity and pressure.
As it is customary in fluid-structure interaction analysis, we abbreviate $\Omega_{\eta} (t) \times (0,T)$ to mean $\bigcup_{t \in (0,T)} \Omega_{\eta} (t) \times \{t\}$.

The displacement $\eta(\bx,t)$ of the elastic boundary is governed by the fourth-order damped plate equation
\begin{equation}
\etah_{tt}
-
\aa
\Delta_{\bx} \etah_t
+
\Delta_{\bx}^2 \etah
= g \onon{\tt\times (0,T)}
,
\label{EQPL}
\end{equation}
where $\Delta_{\bx}=\partial_{11}+\partial_{22}$ is the horizontal Laplacian and $\aa\geq0$ is a constant.
The case $\aa>0$ corresponds to a visco-elastic plate \cite{CR}, while $\aa=0$ recovers the classical elastic plate.
The forcing term $g(\bx,t)$ represents the vertical fluid traction acting on the structure.
Specifically,
\begin{equation}
g
=
-S_{\eta}
(-p I+\nabla u) (\bx, \eta (\bx,t),t)
n^{\eta}\cdot e_3
\onon{\tt\times (0,T)},
\label{EQ82a}
\end{equation}
where $S_{\eta} (\bx,t)=\sqrt{1+|\nablax \eta (\bx,t)|^2}$ is the surface element of the deformed plate, and $n^{\eta}$ is the unit outward normal to the graph of $\eta(\bx,t)$, given by
\begin{align}
n^{\eta}
=
\frac{1}{\sqrt{1+ |\nablax \eta|^2}}
(-\partial_{1} \eta, -\partial_{2} \eta, 1).
\llabel{EQ83}
\end{align}

We impose the no-slip condition at the bottom of the fluid layer and require the velocity to match the motion of the elastic boundary on the top:
\begin{align}
\begin{split}
u(\bx,0,t)&=0
\onon{\tt \times (0,T)},
\\
u(\bx, \eta(\bx,t),t)
&=(0,0,\etah_t(\bx,t))
\onon{\tt\times (0,T)}.
\label{EQ26}
\end{split}
\end{align}
Periodic boundary conditions are assumed along the lateral boundaries.

The system is complemented with initial conditions
\begin{align}
u(x,z,0)= u_0(x,z), \quad
\eta (x,z,0) = \eta_0 (x,z),
\quad
\pt \eta (x,z,0) = \eta_1 (x,z).
\label{EQ27}
\end{align}

The objective of this paper is to establish global existence of weak solutions under an assumption on the initial data.

\subsection{The main result}
First, we define the following function spaces:
\begin{align}
\begin{split}
	&
	V_{\eta}=\{u \in H^1(\Omega_{\eta} (t)): \diver u =0, {u(x,0,t)=0 \text{~on~} \TT^2 \times (0,T)}\},
	\\&
	V_F= 
	L^\infty ((0,T), L^2 (\Omega_{\eta } (t)))
	\cap L^2 ((0,T), V_{\eta} (t)),
	\\& 
	V_K= L^\infty ((0,T), H^2 (\TT^2))
	\cap W^{1,\infty} ((0,T), L^2 (\TT^2)),
		\\& 
	V_S= \{(u,\eta)\in V_F\times V_K: u(x,\eta (x,t),t) = \partial_t \eta (x,t) e_3, ~ \forall (x,t)\in \TT^2 \times (0,T)\},
	\\&
	V_T=\{(\Phi,\phi) \in V_F \times V_K: \partial_t\Phi\in L^2((0,T), L^2(\Omega_{\eta}(t))),\; \Phi (x,\eta (x,t),t) = \phi (x,t) e_3, \\ &~\qquad~\qquad~\qquad~\qquad \forall (x,t)\in \TT^2 \times (0,T)\}.
	\llabel{EQ28}
\end{split}
\end{align}

By formally multiplying equations \eqref{EQNS}$_1$ by $u$ and \eqref{EQPL} by $\pa_t\etah$,
integrating over $\Omega_{\eta} (t)$ and $\TT^2$ respectively,
integrating by parts in space, and using the boundary condition \eqref{EQ26}, we obtain
\begin{align}
	\begin{split}
		&
		\frac{1}{2}
		\left(\|u(t)\|^2_{L^2(\Omega_{\eta} (t))}
		+
		\|\Delta_{\bx}\etah(t)\|^2_{L^2(\TT^2)}
		+
		\|\partial_t\etah(t)\|^2_{L^2(\TT^2)} \right)
		+
		\int_0^t\!\!\int_{\Omega_{\etah}(s)}|\nabla u|^2
		\dd \bx \dd z \dd s
		\\&\indeq
		+
		\aa
		\int_0^t
		\int_{\TT^2}
		|\partial_t\nabla_{\bx}
		\eta(s)|^2
		\dd \bx \dd z \dd s
		%	\\&\qquad\qquad\qquad\qquad\qqua
		\leq
		\frac{1}{2}
		\left(	\Vert u_0\Vert_{L^2 (\Omega_\eta (0))}^2
				+ 	\|\Delta_{\bx} \etah_0\|^2_{L^2(\TT^2)}
		+
		\|\etah_1\|^2_{L^2(\TT^2)}
	\right),
		\label{EI}
	\end{split}
\end{align}
where we used the Reynolds transport theorem.

Next, we state the definition of the weak solution of the system \eqref{EQNS}--\eqref{EQ27}.
\begin{definition}
	\label{D01}
{\rm
	We say that $(u,\eta) \in V_S$ is a weak solution of
\eqref{EQNS}--\eqref{EQ27} if it satisfies the energy inequality \eqref{EI}, and for every test functions $(\Phi, \phi)\in V_T$ the equality 
\begin{align}
\begin{split}
	\frac{d}{dt}
	\left(	\int_{\tt}
	\pt \eta \phi 
	+
	\int_{{\Omega}_{\eta} (t)}
	u\cdot \Phi
	\right)
	&
	= \int_{{\Omega}_{\eta} (t)}
	\left(	u \pt \Phi
	+
	u \otimes u : \nabla \Phi
	-\nabla u : \nabla \Phi
	 \right)
	\\&
	\quad
	+ \int_{\tt}
	\left(\pt \eta \pt \phi
	-
	\Deltax \eta \Deltax \phi
	+
	\aa \pt \eta  \Deltax \phi
	\right)
	\label{EQweak}
\end{split}
\end{align}
holds in~$\mathcal{D}'(0,T)$.
}
\end{definition}

The following theorem is our main result.

\begin{theorem}
	\label{T01}
	For every $\kappa \in (0,1)$, there exists a constant $C_\kappa>0$ such that  if
	\begin{align}
		\| u_0 \|^2_{L^2({\Omega}_{\eta}(0))}
		+
		\|\eta_1\|^2_{L^2({\mathbb{T}^2})}
		+
		\|\Delta_{\bx}\eta_0 \|^2_{L^2({\mathbb{T}^2})}
		\leq \frac{\bar{\eta}_0^2}{C_\kappa},
		\label{cond_nonc}
	\end{align}
where $\bar{\eta}_0
=
\int_{\mathbb{T}^2} \eta_0 (x) \dd \bx >0$,
then there exists a global-in-time weak solution $(u,\eta)$ of \eqref{EQNS}--\eqref{EQ27}, and $\eta \geq \kappa\bar{\eta}_0$ on $\tt\times(0,+\infty)$.
\end{theorem}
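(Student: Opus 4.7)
The strategy is to combine the local-in-time construction of a weak solution, valid up to a possible contact time (as available from the works cited in Section~\ref{sec01}), with an unconditional a priori bound that excludes contact under \eqref{cond_nonc}. First, I would invoke one of those constructions to obtain a weak solution $(u,\eta)$ on a maximal interval $[0,T^\ast)$ satisfying Definition~\ref{D01} and \eqref{EI}, together with the dichotomy that either $T^\ast=+\infty$, or $\inf_{\bx,\,t<T^\ast}\eta(\bx,t)\to 0$ as $t\to T^\ast$. The entire task then reduces to ruling out the second alternative under \eqref{cond_nonc}.

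The key observation is that the mean of $\eta$ is conserved in time. By the incompressibility of $u$, the no-slip condition \eqref{EQ26}$_1$, and the kinematic coupling \eqref{EQ26}$_2$, the divergence theorem applied to $\Omega_\eta(t)$ yields $\int_{\TT^2}\partial_t\eta\,d\bx = 0$, hence
\[
\int_{\TT^2}\eta(\bx,t)\,d\bx=\bar\eta_0\quad\text{for all } t\in[0,T^\ast).
\]
On the two-dimensional torus, the embedding $H^2(\TT^2)\hookrightarrow C(\TT^2)$ combined with Poincaré's inequality furnishes an absolute constant $C_S>0$ such that $\|f-\bar f\|_{L^\infty(\TT^2)}\leq C_S\|\Delta_{\bx}f\|_{L^2(\TT^2)}$. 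Applying this to $\eta(\cdot,t)$ and combining with the energy inequality \eqref{EI} gives
\[
\|\eta(t)-\bar\eta_0\|_{L^\infty(\TT^2)}\leq C_S\|\Delta_{\bx}\eta(t)\|_{L^2}\leq C_S\bigl(\|u_0\|_{L^2(\Omega_\eta(0))}^2+\|\eta_1\|_{L^2(\TT^2)}^2+\|\Delta_{\bx}\eta_0\|_{L^2(\TT^2)}^2\bigr)^{1/2}.
\]
Setting $C_\kappa:=C_S^2/(1-\kappa)^2$, the assumption \eqref{cond_nonc} forces $\|\eta(t)-\bar\eta_0\|_{L^\infty}\leq(1-\kappa)\bar\eta_0$ and therefore
\[
\eta(\bx,t)\geq\bar\eta_0-\|\eta(t)-\bar\eta_0\|_{L^\infty}\geq\kappa\bar\eta_0>0,
\]
uniformly in $(\bx,t)\in\TT^2\times[0,T^\ast)$, which rules out contact and forces $T^\ast=+\infty$.

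The main obstacle is upgrading these manipulations, which look formal at first, to the weak-solution level. Specifically, the bound $\eta\geq\kappa\bar\eta_0$ must be available for every $(\bx,t)$, not merely for almost every $t$. To this end, I would exploit the regularity $\eta\in L^\infty(0,T;H^2(\TT^2))\cap W^{1,\infty}(0,T;L^2(\TT^2))$ built into Definition~\ref{D01}: interpolation yields $\eta\in C([0,T];H^s(\TT^2))$ for every $s<2$, and taking $s\in(1,2)$ places $\eta$ in $C([0,T]\times\TT^2)$, so the pointwise inequality becomes meaningful at every time. A second technical point is to justify mean conservation directly from the weak formulation \eqref{EQweak} in the moving-domain setting (or, equivalently, from the divergence-free constraint embedded in $V_\eta$ together with the kinematic coupling in $V_S$); this is standard but requires a suitable test pair compatible with the constraints of $V_T$. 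Once these two points are settled, the standard continuation argument extends the solution globally, completing the proof with $C_\kappa=C_S^2/(1-\kappa)^2$.
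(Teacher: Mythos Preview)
Your proposal is correct and follows essentially the same route as the paper: invoke the local existence/blow-up alternative, use incompressibility together with the boundary conditions to conserve $\int_{\TT^2}\eta$, combine the energy inequality \eqref{EI} with the Sobolev--Poincar\'e bound $\|\eta-\bar\eta_0\|_{L^\infty}\leq C_S\|\Delta_{\bx}\eta\|_{L^2}$, and take $C_\kappa=C_S^2/(1-\kappa)^2$ to force $\eta\geq\kappa\bar\eta_0$ and exclude contact. The paper does not dwell on the two technical points you flag (pointwise-in-time meaning of the $L^\infty$ bound and justification of mean conservation at the weak level), so your discussion there is a welcome addition rather than a deviation.
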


\begin{remark}
{\rm
\label{R02}
Our proof shows, moreover, that every
weak solution
on $(0,\bar T)$, where $\bar T>0$,
that satisfies
\eqref{cond_nonc}
for some $\kappa\in(0,1)$ and $C_{\kappa}>0$,
cannot have a contact at any $T\in(0,\bar T)$
with $\inf_{t\in[0,\bar T)}\eta>0$.
}
\end{remark}

\begin{remark}
{\rm
	\label{R01}
From the fluid-mechanical point of view, we adopt the Cauchy stress tensor $\sigma (u,p) = -p \mathbb{I}_3 +\nabla u$
	in \eqref{EQNS}--\eqref{EQ27}, i.e., we omit the transposed gradient term~$\nabla^T u$. 
Theorem~\ref{T01} remains valid if the full Cauchy stress, including $\nabla^T u$, is used; see \cite[Remark~1]{GH} for further details.
}
\end{remark}

%Throughout this paper, we use $C\geq 1$ to denote a sufficiently large constant and the value of $C$ may vary from line to line. 

\section{Proof of Theorem~\ref{T01}}\label{sec:proof}
First, we state the following blow-up alternative.

\begin{proposition}
	\label{P01}
There exists a weak solution $(u,\eta)$ on the time interval $(0,T)$ in the sense of Definition~\ref{D01}.
Furthermore, one of the following statements holds:
	\begin{enumerate}
		\item Either $T=+\infty$, or
		\item $T<\infty$ and a contact occurs at time~$T$.
	\end{enumerate}
\end{proposition}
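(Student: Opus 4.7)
The plan is to combine a local-in-time construction of weak solutions up to a possible contact with a continuation argument based on the energy inequality \eqref{EI}. A local existence theorem for weak solutions of systems of this type (Galerkin or penalization scheme on a family of artificial domains $\Omega_{\eta+\epsilon}$, together with an Aubin--Lions compactness argument adapted to the fluid--structure coupling) is by now standard; see \cite{CDEG,G,MS,MC1,MC2,TW}. Applied to the data $(u_0,\eta_0,\eta_1)$ satisfying the hypotheses of Definition~\ref{D01}, it yields a weak solution on some initial interval $(0,T_0)$ with $\inf_{\tt\times[0,T_0)}\eta>0$.

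\textbf{Maximal time and dichotomy.} I would then define
\[
T^{*}=\sup\bigl\{\,T>0 : \text{a weak solution in the sense of Definition~\ref{D01} exists on } (0,T) \text{ with } \inf_{\tt\times(0,T)}\eta>0\,\bigr\}
\]
and show that if $T^{*}<+\infty$, a contact must occur at $T^{*}$. Suppose for contradiction that $T^{*}<+\infty$ and there exists $\delta>0$ with $\eta\geq\delta$ on $\tt\times[0,T^{*})$. From \eqref{EI}, the uniform bounds
$u\in L^{\infty}(0,T^{*};L^{2})\cap L^{2}(0,T^{*};H^{1})$,
$\eta\in L^{\infty}(0,T^{*};H^{2}(\tt))$,
$\pt\eta\in L^{\infty}(0,T^{*};L^{2}(\tt))$,
together with $\pt\eta\in L^{2}(0,T^{*};H^{1}(\tt))$ in case $\aa>0$, are immediate. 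Combined with $\eta\geq\delta$, they control the time-dependent domain in a uniform manner, so that the usual compactness/trace arguments produce weak limits $\eta^{*}:=\eta(\cdot,T^{*})\in H^{2}(\tt)$ with $\eta^{*}\geq\delta$, $\eta_{1}^{*}:=\pt\eta(\cdot,T^{*})\in L^{2}(\tt)$, and $u^{*}\in L^{2}(\Omega_{\eta^{*}})$.

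\textbf{Restart and concatenation.} The triple $(u^{*},\eta^{*},\eta_{1}^{*})$ is admissible initial data, and applying the local existence theorem again yields a weak solution on an interval $(T^{*},T^{*}+\tau)$ with $\tau>0$ and $\inf\eta>0$ there. Gluing this extension to the original solution on $(0,T^{*})$ produces a pair on $(0,T^{*}+\tau)$ still satisfying $\inf\eta>0$, which contradicts the definition of $T^{*}$. Combined with the trivial case $T^{*}=+\infty$, this yields the stated dichotomy.

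\textbf{Main obstacle.} The principal technical point is the concatenation step: one must verify that the glued pair $(u,\eta)$ satisfies the variational identity \eqref{EQweak} on the entire interval $(0,T^{*}+\tau)$ and obeys the energy inequality \eqref{EI} with the \emph{original} initial data on the right-hand side. This requires weak $L^{2}$-continuity in time of $u$ and $\pt\eta$ at $t=T^{*}$, a localization in time of the test functions across $T^{*}$, and a careful use of the Reynolds transport formula on the moving domain to patch the two energy inequalities additively. A secondary subtlety is that local existence references typically presume that the initial velocity belongs to $L^{2}$ on the initial fluid domain and that the compatibility $u^{*}(x,\eta^{*}(x))=\eta_{1}^{*}(x)e_{3}$ holds in a suitable trace sense; the first is furnished by the uniform energy bound, while the second is inherited from the trace compatibility enforced in $V_{S}$ along the sequence $t\to T^{*-}$.
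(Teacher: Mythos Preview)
Your proposal is correct and follows essentially the same approach as the paper: both defer the construction of local-in-time weak solutions to the existing literature \cite{CDEG,G,MC1,MC2,MS,TW} and obtain the blow-up alternative via a continuation argument based on the uniform energy bounds from \eqref{EI}. The paper's own proof is in fact even more terse than yours---it simply observes that the periodic boundary conditions do not affect the construction scheme of \cite{MC1} or the energy-based blow-up argument, and refers the reader to \cite[Section~6.1]{MS} for the adaptations---whereas you spell out the restart-and-glue mechanism and its attendant subtleties explicitly.
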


\begin{proof}[Proof of Proposition~\ref{P01}]
	The construction of a weak solution in this context is by now standard, so we only sketch the argument and refer to the literature for details. We only consider the case $\aa=0$, which is mathematically the most challenging. The case $\aa>0$ produces an additional dissipation term
	$\aa\int_{\mathbb{T}^2} |\nabla_x\partial_t \eta|^2$
	that yields stronger a priori estimates and is therefore advantageous for the construction.

Our case differs from the existing literature only in the use of periodic boundary conditions in the $x$-direction for both the fluid and the structure. The analog of Proposition~\ref{P01} for the clamped plate and Dirichlet boundary conditions for the fluid velocity was proven in \cite[Theorem~1]{G}, in the case of a nonlinear plate in \cite[Theorem~2.1]{TW}, for the dynamic pressure boundary conditions for the fluid and a nonlinear shell in \cite[Theorem~15]{MC2}, and for periodic boundary conditions for the structure, where the fluid is fully enclosed in the structure (so that there are no fluid boundary conditions) and the structure is a nonlinear Koiter shell, in~\cite[Theorem~1.2]{MS}. 

The construction of weak solutions in \cite{TW,MC2,MS} is based on the methodology developed in \cite{MC1}, which decouples the problem into the fluid and structure parts, and is therefore insensitive to the precise form of the fluid boundary conditions, provided that they yield a well-posed fluid problem and an energy inequality for the coupled system. Note that periodic boundary conditions satisfy these requirements. Consequently, the construction of weak solutions in our periodic setting follows along the same lines as in~\cite{TW,MC2,MS}. For details on how to adapt this construction scheme, we refer the reader to \cite[Section~6.1]{MS}, where a more general geometry and more general structure models are treated, with the fluid fully enclosed in the structure (and hence without fluid boundary conditions). Also, note that the proof of the blow-up alternative is based on energy estimates and also do not depend on the fluid boundary conditions.
\end{proof}

Now we are ready to prove the main result.

\begin{proof}[Proof of Theorem~\ref{T01}]
Assume that there does not exist a global weak solution.
By Proposition~\ref{P01}, there exists a weak solution $(u,\eta)$ on $(0,T)$ in the sense of Definition~\ref{D01}, and $T$ is the first contact time.
%
%By Proposition~\ref{P01}, there exists a weak solution $(u,\eta)$ in the sense of Definition~\ref{D01} on $(0,T)$.
%\colr
%Assume that
%
%Assume, contrary to the assertion, that the first contact occurs at a time $T>0$.
We aim to prove that a contact does not occur at~$T$.
Using the incompressibility condition \eqref{EQNS}$_2$ and the kinematic coupling \eqref{EQ26}$_2$, we obtain
\begin{align}
\begin{split}
	&
	\frac{d}{d t}
	\int_{\mathbb{T}^2}
	\eta (\bx,t)\dd \bx
	=
	\int_{\mathbb{T}^2}
	\pt \eta (\bx,t)\dd \bx
	=
	\int_{\mathbb{T}^2} 
	u_3 (\bx, \eta (\bx,t), t)
	\dd \bx
	\\&
	=
	\int_{\partial {\Omega}_{\eta} (t)}
	u(\bx,z, t) \cdot n^\eta \dd \bx \dd z
	=
	\int_{{\Omega}_\eta (t)} \diver u (\bx, z) \dd \bx \dd z
	=0,
	\llabel{EQcons}
\end{split}
\end{align}
which leads to
\begin{align}
	\bar{\eta}_0
	=
	\int_{\mathbb{T}^2} \eta_0 (\bx) \dd \bx
	=
	\int_{\mathbb{T}^2} \eta (\bx,t) \dd \bx
	\comma 
	0\leq t < T.
   \label{EQ01}
\end{align}

From the energy inequality \eqref{EI}, we have
\begin{align}
    E(t) + \int_0^t D(s) \leq E(0) 
    \comma 
    0\leq t < T,
    \label{EQ10}
\end{align}
where
\begin{align*}
    E(t) = \frac{1}{2}
    \left(
    \| u(t)\|^2_{L^2(\Omega_{\eta}(t) )}
	+
	\|\partial_{t}\eta(t)\|^2_{L^2({\mathbb{T}^2})}
	+
	\|\Delta_{\bx} \eta(t)\|^2_{L^2({\mathbb{T}^2})}
	\right),
\end{align*}
represents the energy
and
	\begin{align*}
	D(t) = 
	\Vert \nabla u \Vert_{L^2 (\Omega_{\eta} (t))}^2
	+ 
	\aa
	\|\partial_{t}
	\nabla_{\bx}\eta(t)\|^2_{L^2({\mathbb{T}^2})}
\end{align*}
is the dissipation.
Using the Sobolev embedding $H^2({\mathbb{T}^2})\hookrightarrow
L^\infty({\mathbb{T}^2})$
and the standard $H^2$-regularity, there exists a constant $C_S>0$ such that
\begin{align}
	\|\eta(t) - \bar\eta(t)\|_{L^{\infty}({\mathbb{T}^2})} 
	\leq 
	%C\|\eta(t) - \overline{\eta} (t)\|_{H^2({\mathbb{T}^2})}
	%\leq
	C_S \Vert  \Delta_{\bx} \eta (t)\Vert_{L^2 (\tt)}
	\comma 
	0\leq t < T,
	\label{EQ100}
\end{align}
where $\bar\eta(t) = \int_{\mathbb{T}^2} \eta(t) \dd x$.
Since, by \eqref{EQ01},
$\bar{\eta} (t)= \int_{\mathbb{T}^2} \eta_0 (x) \dd  x = \bar{\eta}_0$, it follows from \eqref{EQ10} and \eqref{EQ100} that
\begin{equation*}
	\|\eta(t) - \bar{\eta}_0\|_{L^\infty({\mathbb{T}^2})} 
	\leq 
	C_S\|\Delta_{\bx} \eta(t)\|_{L^2({\mathbb{T}^2})}
	\leq
	C_S \sqrt{2E(t)}
	\leq
	C_S \sqrt{2E(0)}
	\comma
	0\leq t < T.
\end{equation*}
Therefore, we have
\begin{equation}
    \eta(x,t) 
    \geq 
    \bar{\eta}_0 
    -
    C_S \sqrt{2E(0)}
    \comma x\in \tt, \quad {0\leq t < T}.
   \label{EQ02}
\end{equation}
%where $C_0>0$ is a constant.
%Now, let $\kappa >0$ be a small positive number, and assume that
%\begin{align}
%	\bar{\eta}_0-\kappa 
%	\geq 
%	C\sqrt{E(0)}
%	,
%\end{align}
Choosing the constant $C_\kappa = C_S^2/(1-\kappa)^2$ and initial data satisfying \eqref{cond_nonc}, i.e., %sufficiently large so that
\begin{align}
	%\overline{\eta}_0
	%\geq 
	%\frac{1}{2} \overline{\eta}_0
	%+
	%C_0 \sqrt{E(0)}.
	2E(0) \leq \frac{\bar{\eta}_0^2}{C_\kappa},
	\label{EQ11}
\end{align}
from \eqref{EQ02} and \eqref{EQ11} it follows that
\begin{equation*}
    \eta(x,t) 
    \geq
    \kappa \bar{\eta}_0>0
    \comma x\in \tt,\quad {0\leq t < T}.
\end{equation*}
Thus, we conclude that contact does not occur at the time~$T$.
This contradicts the definition of $T$, by Proposition~\ref{P01}, and
thus the weak solution $(u,\eta)$ is global.
\end{proof}

We conclude this section with a remark.

\begin{remark}
{\rm
In applications, the choice of $\kappa\in(0,1)$ in Theorem~\ref{T01} controls the guaranteed distance to the contact.
}
\end{remark} 

\section{Applications to microfluidics}\label{sec:app}
We start with the geometry of the three-dimensional periodic channel with deformable top boundary. At every time $t\geq 0$, the fluid occupies the domain 
\begin{equation*}
{\Omega}_{\eta}^L (t)=
\{ (\bx, z) : \bx=(x_1,x_2) \in \omegaL,
z\in (0,\eta(\bx,t))\}\subseteq\R^3,
\end{equation*}
of lateral lengths $L>0$,
where $\omegaL=\R^2/(L\Z)^2$ is the two-dimensional flat torus of side length~$L$, and $\eta \colon \omegaL\times [0,+\infty) \to \R$ represents the height of the moving boundary.
The dimensional Navier-Stokes equations describing the flow of a 
viscous fluid in $\Omega^L_{\eta} (t)$ are given by
\begin{align}
\begin{split}
\varrho_f\left(\partial_{t} u  + u\cdot \nabla u \right) - \mu\Delta u  + \nabla p &= 0 \onon{\Omega_{\eta}^L (t) \times (0,T)}, \\
\diver u &= 0 \onon{\Omega_{\eta}^L (t) \times (0,T)},
\end{split}
\label{EQdNS}
\end{align}
where $\varrho_f$ is the fluid density and $\mu$ is the viscosity.
For simplicity we neglect the viscoelasticity and assume that the dynamics of the top boundary is governed by the dimensional plate equation 
\begin{equation}
\varrho_s b\etah_{tt}
+
B\Delta_{\bx}^2 \etah
= G \onon{\omegaL\times (0,T)}
,
\label{EQdPL}
\end{equation}
where 
\begin{equation}
G
=
-S_{\eta}
(-p I+\mu\nabla u) (\bx, \eta (\bx,t),t)
n^{\eta}\cdot e_3
\onon{\omegaL\times (0,T)}.
\label{EQ82b}
\end{equation}
Above, $\varrho_s$ denotes the structure density, $b$ is the thickness of the top plate, and $B$ is the bending coefficient given by
$B = Eb^3/12(1 - \nu^2)$, where $E$ is the Young's modulus and $\nu$ the Poisson ratio
of the plate. 

We non-dimensionalize the system \eqref{EQdNS}--\eqref{EQ82b}
in the standard way. The geometry of the channel is 
non-dimensionalized
%by the channel length $L$, and other non-dimensional
%quantities, denoted by hats, 
by
\begin{align*}
  \hat{x} = \frac{x}{L} ,\quad
  \hat{z} = \frac{z}{L} ,\quad
  \hat t = \frac{t}{T}  ,\quad
  \hat u = \frac{u}{U}  ,\quad
  \hat{p} = \frac{p}{P},\quad
  \hat\eta = \frac{\eta}{L}.
\end{align*}
Setting the time and pressure scales as
\begin{equation*}
T = \frac{L}{U},\quad P = \frac{\mu U}{L}%\quad F = \frac{P}{L}
\end{equation*}
leads to the non-dimensionalized Navier-Stokes equations
\begin{align}
\begin{split}
\Rey\left(\partial_{\hat t} \hat u  + \hat u\cdot \hat\nabla \hat u \right) - \hat\Delta \hat u  + \hat\nabla \hat p &= 0 \onon{\Omega_{\hat\eta}^1 (\hat t) \times (0,\hat T)}, 
\\
\hat{\diver} ~ \hat u &= 0 \onon{\Omega_{\hat\eta}^1 (\hat t) \times (0,\hat T)},
\end{split}
\label{EQndNS}
\end{align}
where $\Rey = \varrho_f UL/\mu$ denotes the Reynolds number. This is precisely our starting system \eqref{EQNS} with $\Rey = 1$. 
%\begin{remark}
%A customary approach in lubrication theory would be to take different domain scales, which would then lead to another rescaled version of the Navier-Stokes system. On the contrary, we avoid this a priori scale separation in the fluid domain and follow an approach that is ansatz-free and based on careful examination of energy estimates.
%\end{remark}

Similarly, the plate equation turns into 
\begin{equation}
\frac{\varrho_s b U}{\mu}\hat \eta_{{\hat t}{\hat t}} 
+ \frac{B}{\mu U L^2}\hat \Delta_{\hat x}^2\hat \eta  
= \hat G
 \onon{\tt\times (0,\hat T)}.\label{EQndPL}
\end{equation} 
Introducing the dimensionless numbers 
\begin{align}
\rho = \frac{\varrho_sbU}{\mu}\quad\text{and}\quad \beta = \frac{B}{\mu U L^2},
\end{align}
and then neglecting hats in further derivations, the energy of the system \eqref{EQndNS}--\eqref{EQndPL} equals
\begin{equation*}
E(t) =  \frac{1}{2}
    \left(
    \Rey\| u(t)\|^2_{L^2(\Omega_{\eta}(t) )}
	+
	\rho\|\partial_{t}\eta(t)\|^2_{L^2({\mathbb{T}^2})}
	+
	\beta\|\Delta_{\bx} \eta(t)\|^2_{L^2({\mathbb{T}^2})}
	\right).
\end{equation*}
Following the proof of Theorem \ref{T01}, the condition \eqref{EQ11} becomes
\begin{align}
	\frac{2}{\beta}E(0) \leq \frac{\bar{\eta}_0^2}{C_\kappa},
	\label{EQ111} 
\end{align}
which reads
\begin{align}
		\frac{\Rey}{\beta}\| u_0 \|^2_{L^2({\Omega}_{\eta}(0))}
		+
		\frac{\rho}{\beta}\|\eta_1\|^2_{L^2({\mathbb{T}^2})}
		+
		\|\Delta_{\bx}\eta_0 \|^2_{L^2({\mathbb{T}^2})}
		\leq \frac{\bar{\eta}_0^2}{C_\kappa}.
		\label{cond_nonc_nd}
	\end{align}

We now discuss \eqref{cond_nonc_nd} in the context of microfluidic channels. For this purpose, we introduce 
the relative channel thickness $\eps = H/L$, where $H$ is the height of the channel. In the microfluidic setup, $\eps$ is typically a small parameter. 
Note that above we rescaled the channel height also by $L$; therefore, an assumption that $\eta_0(x) = \eps h_0(x)$ with $\bar h_0 =  \int_{\TT^2} h_0 (x)\dd x=1$ expresses the small initial height of the channel. 

Assume that the fluid is initially in the laminar regime, close to the Poiseuille flow, i.e., $u_0(x,z) \approx z(z-\eta_0)\tilde u_0(x)$, and the first term on the left hand side of \eqref{cond_nonc_nd}  is bounded by means of the corresponding norm of the Poiseuille velocity, i.e.,
\begin{equation}
	\frac{\Rey}{\beta}\| u_0 \|^2_{L^2({\Omega}_{\eta}(0))} \leq \frac{C\Rey\eps^5}{\beta}\| h_0^{5/2}\tilde u_0 \|^2_{L^2(\tt)},
\end{equation}
for some $C>0$, independent of all quantities.
Using the definition of the Reynolds number, the parameter $\rho$ can be written as
\begin{equation*}
\rho = \frac{\varrho_s}{\varrho_f}\frac{b}{L}\Rey,
\end{equation*}
and we may assume that $\rho\leq \Rey \eps$ due to $b\ll L$  
%\linfeng{$b\ll H$?} 
%Mario: not always, b can be comparable to H
and with $\varrho_s$ comparable to~$\varrho_f$. 
%\linfeng{If we assume the fluid is initially the Poiseuille flow $u_0 (x,z)= z(z-\eta_0) \tilde{u}_0 (x)$, then the initial velocity of the structure $\eta_1 (x)= u_0 (x,\eta_0 (x))=0$ and therefore equation (4.9) is not necessary.}
Since $\eta_1(x) = u_0(x,\eta_0(x))\cdot e_3$, assuming that $\pa_z u_0(x,z)\approx (2z - \eta_0)\tilde u_0(x)$, the trace theorem on a thin domain of thickness $O(\eps)$ implies
\begin{align}
\begin{split}
\frac{\rho}{\beta}\|\eta_1\|^2_{L^2({\mathbb{T}^2})} &\leq \frac{C\Rey\eps^5}{\beta}\left(\| h_0^{5/2}\tilde u_0 \|^2_{L^2(\tt)} +\| h_0^{3/2}\tilde u_0 \|^2_{L^2(\tt)}\right. \\
&\left.\qquad\qquad 
+~\| h_0^{5/2}\nabla_x\tilde u_0 \|^2_{L^2(\tt)}+ \| \nabla_xh_0\tilde u_0 \|^2_{L^2(\tt)}\right),
\end{split}
\end{align}
where $C>0$ now depends on~$h_0$.
Finally, the third term on the left-hand side of \eqref{cond_nonc_nd} becomes $\eps^2\|\Delta_{\bx}h_0 \|^2_{L^2({\mathbb{T}^2})}$, while the right-hand side reads $\eps^2/C_\kappa$. Observe that the first two terms in \eqref{cond_nonc_nd} are dominated by 
\begin{equation}\label{EQ410}
\frac{C\Rey\eps^5}{\beta},
%C\left(\frac{\Rey^2\eps^5}{\beta} + \frac{\Rey\eps^5}{\beta}\right),
\end{equation}
where $C>0$ depends on $h_0$, $\tilde u_0$ and their derivatives. In variety of microfluidic applications \cite{CJFY, HoMa04,TBM} the expression in \eqref{EQ410} is of order much less than $O(\eps)$ (cf.~Table \ref{tab1}) and thus the condition \eqref{cond_nonc_nd} essentially reduces to
\begin{equation}\label{EQ411}
\|\Delta_{\bx}h_0 \|^2_{L^2({\mathbb{T}^2})} 
		\leq \frac{1}{C_\kappa} = \frac{(1-\kappa)^2}{C_S^2},
\end{equation}
where $C_S>0$ is the optimal Sobolev constant defined in \eqref{EQ100} and $\kappa\in(0,1)$ controls the distance to the contact, which here equals~$\kappa\eps$. 
\begin{table}
\caption{Non-dimensional numbers for microfluidic channels in corresponding reference.}\label{tab1} 
\centering
\begin{tabular}{lcccc}
\hline%\\[1pt]
{Reference} & 
$\eps$ & 
$\Rey$ & 
$\beta$ &
$\Rey\eps^5/\beta$\\%[4pt]
\hline
\cite{CJFY} & 
$6.9\times 10^{-3}$ & 
$\lesssim 1$ & 
$15$--$760$ &
$2\cdot 10^{-14}$--$10^{-12}$ \\%[4pt]
 
\cite{TBM} & 
$5\times 10^{-3}$ & 
$\approx 1$ & 
$\approx 15$ &
$2\cdot 10^{-13}$ \\%[4pt]

\cite{HoMa04} & 
$1.67\times 10^{-1}$ & 
$\approx 0.4$ & 
$\approx 3.2$ &
$1.62\cdot 10^{-5}$\\%[4pt] 
\hline
%\\[-10pt] 
\end{tabular}
\end{table}
Since we are on the unit torus, the constant $C_S$ can be explicitly calculated by means of the Fourier method, providing
\begin{equation*}
C_S = \frac{1}{(2\pi)^2}\left(\sum_{k\in \Z^2\setminus\{0\}}\frac{1}{|k|^4}\right)^{1/2}.
\end{equation*}
%\linfeng{there should be a square root of the above constant and thus also the numerical constant below}
%Mario: Numerically is correct
Thus, approximately, the condition \eqref{EQ411} becomes 
\begin{equation*}
\|\Delta_{\bx}h_0 \|^2_{L^2({\mathbb{T}^2})} 
		\leq 258.598(1-\kappa)^2.
\end{equation*}
Using this perspective, once again we realize that the condition~\eqref{cond_nonc} in Theorem \ref{T01} is not a smallness assumption, but a rather general condition, which gives the global existence result for large body of physical systems in technological applications. 

%\begin{Remark}
%An intermediate consequence of the existence of the global-in-time weak solution and the energy inequality is that 
%\begin{equation*}
%\lim_{t\to+\infty}D(t) = 0.
%\end{equation*}
%Using a Poincar\'e-type inequality, this implies
%\begin{equation*} 
%\lim_{t\to+\infty}\|u(t)\|_{H^1(\Omega_\eta(t))} = 0  
%\end{equation*}
%and, if $\aa > 0$,
%\begin{equation*} 
%\lim_{t\to+\infty}\|\eta(t) - \bar\eta_0\|_{L^2(\tt)} = 0.  
%\end{equation*}
%From that perspective, Theorem~\ref{T01} gives also a stability result for the unforced FSI problem.
%\colr In particular, the contact does not occur in the infinite time as well.
%\end{Remark}

\section*{Acknowledgment}
MB and BM were supported by the Croatian Science Foundation under
project IP-2022-10-2962, while
IK was supported in part by the NSF grant DMS-2205493.

\small

\end{document}